\def\volno{1}\fi
\def\volyear{2017}\fi
\def\papno{P0.0}\fi
\newfont{\footsc}{cmcsc10 at 8truept}
\newfont{\footbf}{cmbx10 at 8truept}
\newfont{\footrm}{cmr10 at 10truept}
\renewcommand\paragraph{\@startsection{paragraph}{4}{\z@}
                                    {2ex \@plus.5ex \@minus.2ex}
                                    {-1em}
                                    {\normalfont\normalsize\bfseries}}
\renewcommand\subparagraph{\@startsection{subparagraph}{5}{\parindent}
                                       {2ex \@plus.5ex \@minus .2ex}
                                       {-1em}
                                      {\normalfont\normalsize\bfseries}}
\newlength{\BiblioSpacing}
\renewenvironment{thebibliography}[1]{
\begin{oldthebibliography}{#1}
\setlength{\parskip}{\BiblioSpacing}
\setlength{\itemsep}{\BiblioSpacing}
}
{
\end{oldthebibliography}
}
\def\abstractname{Abstract -}   
\def\abstract{\begin{adjustwidth}{1cm}{1cm} \par    \footnotesize \noindent {\bf \abstractname} 
\def\endabstract{ \end{adjustwidth} \smallskip }}
\newtheorem{theorem}{Theorem}[section]}
\newtheorem{proposition}[theorem]{Proposition}}
\newtheorem{definition}[theorem]{Definition}}
\newtheorem{corollary}[theorem]{Corollary}}
\def\dedicatory{\date}
\title{\Large\bf Differences of Harmonic Numbers and the $abc$-Conjecture}
\author{\sc N. da Silva, S. Raianu, and H. Salgado\footnote{This work was supported by a PUMP Undergraduate Research Grant (NSF Award DMS-1247679)}}
\dedicatory{\normalsize\em Dedicated to the memory of Lauren\c{t}iu Panaitopol (1940-2008)}
\begin{document}
\maketitle
\thispagestyle{fancy}

\vskip 1.5em

\begin{abstract}
Our main source of inspiration was a talk by Hendrik Lenstra on harmonic numbers, which are numbers whose only prime factors are two or three. Gersonides proved 675 years ago that one can be written as a difference of harmonic numbers in only four ways: 2-1, 3-2, 4-3, and 9-8.  We investigate which numbers other than one can or cannot be written as a difference of harmonic numbers and we look at their connection to the $abc$-conjecture. We find that there are only eleven numbers less than 100 that cannot be written as a difference of harmonic numbers (we call these $ndh$-numbers). The smallest $ndh$-number is 41, which is also Euler's largest lucky number and is a very interesting number. We then show there are infinitely many $ndh$-numbers, some of which are the primes congruent to $41$ modulo $48$. For each Fermat or Mersenne prime we either prove that it is an $ndh$-number or find all ways it can be written as a difference of harmonic numbers. Finally, as suggested by Lenstra in his talk, we interpret Gersonides' theorem as ``The $abc$-conjecture is true on the set of harmonic numbers" and we expand the set on which the $abc$-conjecture is true by adding to the set of harmonic numbers the following sets (one at a time): a finite set of $ndh$-numbers, the infinite set of primes of the form $48k+41$, the set of Fermat primes, and the set of Mersenne primes.
\end{abstract}
 
\begin{keywords}
harmonic numbers; modular arithmetic; exponential Diophantine equation; Gersonides' Theorem; $abc$-conjecture; Dirichlet's Theorem
\end{keywords}

\begin{MSC}
11A07; 11A41; 11D45
\end{MSC}

\section{Preliminary results} 
Lenstra's talk \cite{hl} starts with the following definition introduced by the bishop, music theorist, poet, and composer Philippe de Vitry, a.k.a. Philippus De Vitriaco (1291-1361):
\begin{definition}\label{harmnum}
A {\em\bf harmonic number} is a number that can be written as a power of two times a power of three.
\end{definition}
Vitry found the following consecutive pairs of harmonic numbers: 1,2; 2,3; 3,4; 8,9. These pairs correspond to the frequency ratios in the following musical intervals: octave, perfect fifth, perfect fourth, major second (or whole tone). (In music, intervals with frequency ratios a power of two over a power of three, or vice versa, are called Pythagorean intervals.) He asked whether these are the only pairs of consecutive harmonic numbers, and his question was answered in the affirmative by the mathematician, philosopher, astronomer, and Talmudic scholar Levi ben Gershom, a.k.a. Gersonides (1288-1344). In his talk Lenstra gives the details of the original proof of Gersonides, whose idea was to look at remainders modulo 8. We will give a different proof, using methods that are similar to some that we will use in the other sections of this paper (in sections 2 and 3 we will also make abundant use of remainders modulo 8).
\begin{theorem}\label{gersonides}(Gersonides, 1342) The only two consecutive harmonic numbers greater than $4$ are $8$ and $9$.
\end{theorem}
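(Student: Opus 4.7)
The plan is to exploit the fact that $\gcd(n,n+1)=1$, which forces a strong structure on any two consecutive harmonic numbers. Writing $n=2^{a}3^{b}$ and $n+1=2^{c}3^{d}$, coprimality implies $\min(a,c)=0$ and $\min(b,d)=0$. Discarding the trivial possibilities $n=0$ and $n=1$, one is left with exactly two cases:
\begin{equation*}
\text{(A)}\quad 2^{c}-3^{b}=1, \qquad \text{(B)}\quad 3^{d}-2^{a}=1.
\end{equation*}
So the theorem reduces to solving these two exponential Diophantine equations and verifying that the only solution with $n>4$ is $(a,d)=(3,2)$, i.e.\ the pair $8,9$.

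For equation (A) I would reduce modulo $8$. The powers of $3$ cycle through $\{1,3\}\pmod 8$, while $2^{c}\equiv 0\pmod 8$ as soon as $c\ge 3$. Hence $3^{b}\equiv -1\equiv 7\pmod 8$ would be required, which never happens. This forces $c\le 2$, yielding only the small pairs $(n,n+1)=(1,2)$ and $(3,4)$, both excluded by $n>4$.

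For equation (B), which is the real content of the theorem, I would again start modulo $8$: for $a\ge 3$ one needs $3^{d}\equiv 1\pmod 8$, so $d$ is even. Writing $d=2e$ gives the factorization
\begin{equation*}
2^{a}=3^{2e}-1=(3^{e}-1)(3^{e}+1).
\end{equation*}
Each factor must then itself be a power of $2$, and they differ by $2$. Two powers of $2$ differing by $2$ can only be $\{2,4\}$, forcing $3^{e}-1=2$, i.e.\ $e=1$, $d=2$, and $a=3$. Checking the leftover small cases $a\in\{0,1,2\}$ by hand eliminates everything except $(a,d)=(1,1)$, which gives $n=2<4$, so the unique solution with $n>4$ is $n=8$, $n+1=9$.

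The only delicate step is the one in Case~(B) where the factorization $(3^{e}-1)(3^{e}+1)=2^{a}$ is used to conclude that both factors are powers of $2$ differing by $2$; everything else is bookkeeping with residues mod $8$. This is the Catalan-style kernel of the argument, and it is what the author presumably means by ``methods similar to the rest of the paper,'' since analogous mod-$8$ reductions and small factorizations should drive the treatment of $ndh$-numbers in later sections.
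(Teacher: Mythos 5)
Your proof is correct, but it takes a genuinely different route from the paper's. Ironically, your route is essentially Gersonides' original one: the paper states explicitly that Gersonides worked with remainders modulo $8$ and that the authors will give a \emph{different} proof, built instead on binomial expansions. Concretely, where you dispose of the case $2^{c}=3^{b}+1$ with a single observation (powers of $3$ are only $1$ or $3$ bmod $8$, never $7$, so $c\le 2$), the paper expands $(2+1)^n$ and $(4-1)^n$ and derives contradictory parities for $n$; and where you prove $d$ even via the same mod-$8$ fact, the paper gets evenness by expanding $(2+1)^n$, subtracting $1$, and halving. Both proofs then hinge on the same factorization $3^{2e}-1=(3^{e}-1)(3^{e}+1)=2^{a}$, but your finish is cleaner: two powers of $2$ differing by $2$ must be $2$ and $4$, whereas the paper re-runs its evenness argument on the factor $3^{2l+1}-1=2^{s}$ to force $l=0$ (plus a last-digit argument along the way). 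What your approach buys is brevity and transparency --- it is probably the shortest complete proof of the statement. What the paper's approach buys is methodological: the binomial-expansion trick (write $2=3-1$ or $3=2+1$ or $4=3+1$, expand, and read off a divisibility or parity contradiction) is exactly the tool reused repeatedly in the proofs of Theorems 2.2, 2.5, 2.8, and 3.1, so proving Gersonides' theorem this way serves as a rehearsal; your speculation in the last paragraph has it backwards, since the mod-$8$ reductions in the later sections coexist with, rather than replace, those binomial expansions. One small point of care: your claim that each factor $3^{e}\pm 1$ ``must itself be a power of $2$'' deserves the half-line justification that any positive divisor of a power of $2$ is a power of $2$ --- trivial, but it is the pivot of the whole argument.
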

\begin{proof}
If two harmonic numbers are consecutive, then one of them is a power of two and the other one is a power of three. We assume first that  $3^n=2^m+1$ and that $m>1$, so also $n>1$. Then we have $(2+1)^n=2^m+1$, and using the binomial theorem we obtain:
$$2^n+n2^{n-1}+\ldots+\frac{n(n-1)}{2}2^2+n2+1=2^m+1,$$
so after subtracting 1 from both sides and dividing by 2 we get
$$2^{n-1}+n2^{n-2}+\ldots+n(n-1)+n=2^{m-1}.$$
Since $n(n-1)$ is even, we get that $n=2k$ for some integer $k$, and therefore $3^{2k}=2^m+1$. We now look at the last digit of the number on the left: if $k=2l$ this last digit is 1, which contradicts the fact that no power of 2 ends in 0. So $k=2l+1$, and thus $3^{4l+2}=2^m+1$, or $(3^{2l+1}-1)(3^{2l+1}+1)=2^m$. In conclusion, $3^{2l+1}-1=2^s$ and if $l\ne 0$, then, as above, we obtain that $2l+1$ is even, a contradiction. Thus $l=0$, so $n=2$ and $m=3$.\\
The other case is easier: we assume that $3^n=2^m-1$ and $n>1$, so $m>2$. Then, again we have
$(2+1)^n=2^m-1$, and using the binomial theorem we obtain:
$$2^n+n2^{n-1}+\ldots+\frac{n(n-1)}{2}2^2+n2+1=2^m-1,$$
so after adding 1 to both sides and dividing by 2, we get
$$2^{n-1}+n2^{n-2}+\ldots+n(n-1)+n+1=2^{m-1}.$$
So $n$ is odd. But we can also write $(4-1)^n=2^m-1$, so 
$$4^n-n4^{n-1}+\ldots+n4-1=2^m-1.$$
After adding 1 to both sides and dividing by 4 we get that $n$ is even, a contradiction.
\end{proof}
At the end of his talk, Lenstra mentions the famous $abc$-conjecture (see \cite{gt}). Roughly speaking, it states that if the coprime (i.e. with no common prime factors) positive integers $a$, $b$, and $c$ satisfy $a+b=c$, and if we denote by $rad(n)$ the product of all prime divisors of $n$, then usually $rad(abc)$ is not much smaller than $c$. One version of the precise statement is the following:\\[3mm]
{\bf The $abc$-conjecture} (Oesterl\'{e}-Masser) For any $\varepsilon>0$ there exist only finitely many triples $(a,b,c)$ of coprime positive integers for which $a+b=c$ and $c>rad(abc)^{1+\varepsilon}$.\\[3mm]
Lenstra explains in his talk that the equalities involved in Theorem \ref{gersonides}, namely $3^n=2^m+1$ and $3^n+1=2^m$ roughly look like solutions to the equation from Fermat's Last Theorem (i.e. $x^n+y^n=z^n$, just allow the exponents to be different), and that it is known that Fermat's Last Theorem can be derived from the $abc$-conjecture (see \cite{gt}).  The direct connection between Theorem \ref{gersonides} and the $abc$-conjecture is the following:
\begin{corollary}\label{abcharm}
The $abc$-conjecture is true on the set of harmonic numbers.
\end{corollary}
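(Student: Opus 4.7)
The plan is to show that there are only finitely many coprime positive-integer triples $(a,b,c)$ with $a+b=c$ and all three entries harmonic; such finiteness immediately gives the $abc$-conjecture on this set for every $\varepsilon>0$.

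First I would exploit coprimality to constrain the shape of $a$ and $b$. Writing $a=2^{i_1}3^{j_1}$ and $b=2^{i_2}3^{j_2}$, the fact that $\gcd(a,b,c)=1$ combined with $c=a+b$ forces $\gcd(a,b)=1$, and hence $\min(i_1,i_2)=0$ and $\min(j_1,j_2)=0$. Up to swapping $a\leftrightarrow b$, this leaves just two scenarios: either one of $a,b$ equals $1$, or $\{a,b\}=\{2^i,3^j\}$ with $i,j\ge 1$.

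Next I would dispose of the second scenario with a one-line congruence. In that case $c=2^i+3^j$ is odd, so the harmonic number $c$ must be a pure power of $3$; but then $2^i\equiv 2^i+3^j=c\equiv 0\pmod 3$, which is absurd. Hence one of $a,b$ is $1$, so $b$ and $c=b+1$ are consecutive harmonic numbers. Applying Theorem~\ref{gersonides} together with the small pairs $(1,2),(2,3),(3,4)$ recorded by Vitry, the only admissible triples are
\[
(1,1,2),\quad(1,2,3),\quad(1,3,4),\quad(1,8,9),
\]
and this finite list certainly contains at most finitely many triples satisfying $c>\mathrm{rad}(abc)^{1+\varepsilon}$ for any $\varepsilon>0$.

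There is no substantive obstacle beyond Theorem~\ref{gersonides} itself; the only point requiring a moment of care is the exhaustiveness of the case split on the exponents, and this is exactly what the mod-$3$ argument above settles.
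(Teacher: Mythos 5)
Your proof is correct and takes essentially the same route as the paper: coprimality forces the triple to consist of $1$, a power of $2$, and a power of $3$, after which Gersonides' theorem (Theorem~\ref{gersonides}) yields only finitely many triples, so the conjecture holds vacuously. Your mod-$3$ argument eliminating the case $\{a,b\}=\{2^i,3^j\}$ simply makes explicit the case analysis that the paper compresses into its one-sentence claim that any three coprime harmonic numbers must include $1$.
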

\begin{proof}
The only way to pick three coprime numbers from the set of harmonic numbers is the following: one of the numbers has to be 1, another one is a power of two, and the last one is a power of three. Therefore, the corollary follows directly from Gersonides' Theorem.
\end{proof}
Gersonides' Theorem is also connected to another famous conjecture, proposed by  Catalan in 1844 and solved in 2002 by Preda Mih\u{a}ilescu. It is now called Mih\u{a}ilescu's Theorem:
\begin{theorem} \cite{m} The only integer solutions greater than or equal to 2 of the equation
$$x^z-y^t=1$$
are $x=3, y=2, z=2, t=3$.
\end{theorem}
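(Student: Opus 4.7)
The plan is to outline Mihăilescu's 2002 proof at a high level, since this is the only known approach and it sits far beyond the elementary tools used elsewhere in this paper. I would first dispose of the cases where $z$ or $t$ is composite: if $z = ab$ with $a, b \ge 2$, then $x^z - y^t = (x^a)^b - y^t$, so it suffices to treat prime exponents. The case where one of the exponents is $2$ reduces, via classical elementary arguments in the spirit of Theorem~\ref{gersonides} (and older results of Lebesgue and Ko Chao for $t=2$ and $z=2$ respectively), to the exponent pair $(z,t)=(2,3)$ which yields the known solution $3^2 - 2^3 = 1$.

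The remaining case is $x^p - y^q = 1$ with $p, q$ odd primes. Here I would first invoke Cassels' theorem (1960), which forces $p \mid y$ and $q \mid x$, and yields the factorizations
$$x - 1 = p^{q-1} a^q, \qquad \frac{x^p - 1}{x-1} = p\, b^q,$$
together with symmetric relations for $y$, where $y = pab$. The heart of the proof, and the main obstacle, is Mihăilescu's \emph{double Wieferich criterion}: any hypothetical solution would have to satisfy $p^{q-1} \equiv 1 \pmod{q^2}$ and $q^{p-1} \equiv 1 \pmod{p^2}$. Establishing this requires analyzing the Stickelberger ideal acting on the minus part of the class group of $\mathbb{Q}(\zeta_p)$ together with Thaine's theorem on real abelian fields, and this is the technical core where the entire strategy lives or dies.

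Finally, I would combine the Wieferich relations with explicit inequalities relating $p$ and $q$, obtained by applying linear-forms-in-logarithms type estimates (refinements of Baker's method) to the Cassels factorizations, in order to rule out every remaining prime pair. Because the cyclotomic machinery required is wholly outside the scope of the congruence arguments used in the rest of this paper, our treatment will simply cite \cite{m} and use Mihăilescu's theorem as a black box when comparing Gersonides' result to it in the next section.
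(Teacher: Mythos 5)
The paper gives no proof of this theorem at all: the citation \cite{m} is built into the statement, and the surrounding discussion only remarks that Mih\u{a}ilescu's argument uses cyclotomic fields and Galois modules, while an elementary proof in the spirit of Theorem \ref{gersonides} is available only for the weaker statement \cite[Theorem 2, p. 146]{gp1}, in which $x$ and $y$ are assumed prime. Your decision to treat the result as a cited black box is therefore exactly the paper's own approach, and your structural sketch --- reduction to prime exponents, the Lebesgue and Ko Chao cases where an exponent equals $2$, Cassels' relations, the double Wieferich criterion, and the Stickelberger/Thaine machinery --- is a broadly accurate description of the published proof, with one substantive exception. Your closing step attributes the endgame to ``linear-forms-in-logarithms type estimates (refinements of Baker's method),'' but this is precisely the tool Mih\u{a}ilescu's proof is celebrated for avoiding: Baker-type bounds (Tijdeman's theorem) belong to the earlier, pre-2002 line of attack, which combined them with extensive computation. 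In the proof you are citing, the final inequalities relating $p$ and $q$ (such as $p<4q^{2}$ and $q<4p^{2}$) come from elementary algebraic size arguments in the spirit of Runge's method, and no transcendence theory or electronic computation enters. Since you defer to \cite{m} in the end, this slip does not invalidate your treatment, but the sketch as written credits the proof with a technique it deliberately eschews.
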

Mih\u{a}ilescu's proof uses cyclotomic fields and Galois modules, but a weaker version of his result, \cite[Theorem 2, p. 146]{gp1}, which assumes that $x$ and $y$ are prime, can be proved with elementary techniques similar to the ones used in our proof of Theorem \ref{gersonides}.  If we change the definition of harmonic numbers by replacing 3 with another odd prime, the first thing we notice is that we lose the music applications and therefore the justification for the name. Other than that, \cite[Theorem 2]{gp1} becomes the analog of Theorem \ref{gersonides}: it just says that there will be no consecutive ``new  harmonic" numbers. Corollary \ref{abcharm} will also remain true but it would be less interesting, mainly because the solution $1+2^3=3^2$ has small radical: in this case $rad(abc)=rad(2^33^2)=6<c=9$ making it a ``high quality" solution.\\[2mm]

One of our goals will be to expand the set on which the $abc$-conjecture is true by adding other numbers to the set of harmonic numbers. Even adding just one single number can be tricky, e.g. proving that the $abc$-conjecture holds on the set of harmonic numbers and the number $5$ is quite hard (see the proof of Theorem \ref{fermatdiff} ii)).

\begin{figure}[!htb]
\begin{center}
\includegraphics[width=154mm,scale=0.8]{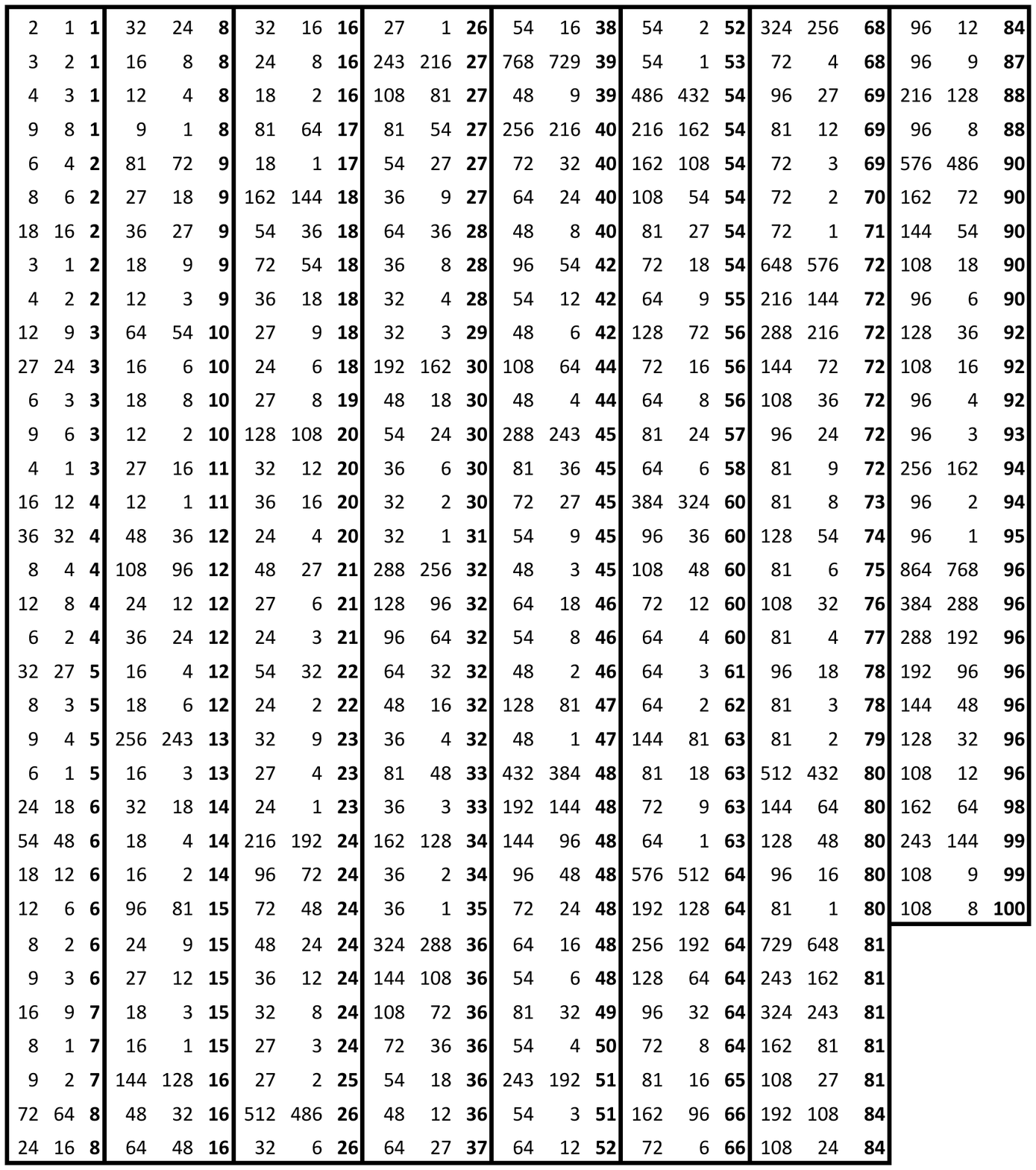}
\end{center}
\caption{\label{fig1} Differences (up to 100) of harmonic numbers less than 1000.}
\end{figure}

\section{Numbers that cannot be written as differences of harmonic numbers}
We inspected the table of harmonic numbers less than 1000 given below:
$$\begin{array}{ccccccc}
1&3&9&27	&81&243&729\\
2&6&18&54&162&486&\\
4&12	&36&108&324&972&\\
8&24&72&216&648&&\\
16&48&144&432&&&\\
32&96&288&864&&&\\
64&192&576&&&&\\
128&384&&&&&\\
256&768&&&&&\\
512&&&&&&
\end{array}$$
and we saw that the first few tens of natural numbers can all be written as a difference of harmonic numbers in this table. Then we asked whether there are any positive integers that cannot be written as a difference of harmonic numbers.

\begin{definition}
A positive integer is called an {\em\bf $ndh$-number} if it cannot be written as a difference of harmonic numbers.
\end{definition}

In Figure \ref{fig1} we have an Excel table listing all one and two digit differences of harmonic numbers in the above table ordered from 1 to 100. We noticed that there are eleven numbers missing, and we checked with a Java program that these eleven numbers cannot be written as differences of harmonic numbers with higher exponents. In the next result we prove that these eleven numbers are  $ndh$-numbers, so together with Figure  \ref{fig1} this shows that these integers are the only $ndh$-numbers in the first 100 positive integers.
\begin{theorem}\label{ndhn100}
The numbers $41, 43, 59, 67, 82, 83, 85, 86, 89, 91,$ and $97$ are $ndh$-numbers.
\end{theorem}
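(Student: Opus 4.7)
The plan is to handle each of the eleven numbers $N$ separately within a unified framework. Writing $N = 2^a 3^b - 2^c 3^d$ and factoring out the common divisor $2^{\min(a,c)} 3^{\min(b,d)}$ gives $N = 2^e 3^f \cdot M$, where $M$ is itself a difference of two \emph{coprime} harmonic numbers. Such an $M$ must take one of three shapes: $M = 3^y - 2^x$ (Form A), $M = 2^x - 3^y$ (Form B), or $M = 2^x 3^y - 1$ (Form C). Enumerating the divisors $2^e 3^f$ of $N$ yields only $M = N$ for the nine entries coprime to $6$, plus the extra possibility $M = N/2$ for $82 = 2 \cdot 41$ and $86 = 2 \cdot 43$, so it suffices to rule out all three forms for these few $M$.

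Form C asks only whether $M+1$ is harmonic. The relevant values $M+1 \in \{42, 44, 60, 68, 83, 84, 86, 87, 90, 92, 98\}$ all fail by direct factorization. Form B succumbs to mod $8$: for $x \geq 3$ one needs $3^y \equiv -M \pmod 8$, which forces $M \equiv 5$ or $7 \pmod 8$. Of our list only $85 \equiv 5 \pmod 8$ survives, and then mod $3$ forces $x$ even, mod $8$ forces $y$ odd, and a final mod $5$ step (with $4^{x/2} \in \{1, 4\}$ and $3^y \in \{2, 3\}$) shows $4^{x/2} - 3^y \not\equiv 0 \pmod 5$. For every other $N$, Form B reduces to checking $x \leq 2$, which is trivially impossible since $N > 4$.

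The heart of the matter is Form A. For $x \geq 3$, mod $8$ pins down the parity of $y$ from $M \bmod 8$ (even if $M \equiv 1$, odd if $M \equiv 3$), and mod $3$ pins down the parity of $x$ from $M \bmod 3$. Substituting accordingly, the equation $3^y - 2^x = M$ becomes one of $9^{y'} - 4^{x'} = M$, $9^{y'} - 2 \cdot 4^{x'} = M$, $3 \cdot 9^{y'} - 4^{x'} = M$, or $3 \cdot 9^{y'} - 2 \cdot 4^{x'} = M$. A mod $5$ reduction then often closes the case at once, since $9 \equiv 4 \equiv -1 \pmod 5$ collapses each power into $\pm 1$; for $N = 41$, for instance, $9^{y'} - 4^{x'} \in \{0, \pm 2\} \pmod 5$ misses $41 \equiv 1 \pmod 5$.

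The main obstacle is that for a few stubborn $N$ (such as $N = 97$), the mod $5$ step only constrains the parities of $x'$ and $y'$ rather than eliminating them, and one must escalate further, typically invoking mod $13$ (using the order-$3$ behavior of $9$) or mod $16$ (using $9^{y'} \in \{1, 9\}$ and small powers of $4$) to finish. Selecting the right auxiliary modulus for each such case is the principal bookkeeping nuisance, but each individual verification is elementary, and the small-exponent corner cases $y \in \{0, 1\}$ or $x \leq 2$ falling outside the mod $8$ argument can simply be read off the table of small harmonic numbers.
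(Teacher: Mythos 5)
Your proposal is correct, and the details you leave implicit do check out, but your route through the hard cases is genuinely different from the paper's. Both arguments share the same skeleton: reduce to a difference of coprime harmonic numbers, split into the shapes $3^y-2^x$, $2^x-3^y$, $2^x3^y-1$, and let congruences mod $8$ do the first round of work. The paper then finishes heterogeneously: for $2^m-3^n$ it handles $85$ by a last-digit argument plus a binomial expansion of $(3-1)^m$ read mod $3$; for $3^n-2^m$ with $N\in\{41,89,97\}$ it forces both exponents even, factors $3^{2s}-2^{2t}=(3^s-2^t)(3^s+2^t)$, and invokes the primality of $N$ together with Gersonides' Theorem (Theorem \ref{gersonides}), while $43,67$ and $59,83,91$ fall to last-digit arguments and further binomial expansions. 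You instead run a uniform ladder of moduli: mod $8$ and mod $3$ fix the exponent parities, mod $5$ (where $9\equiv 4\equiv -1$) kills $41,43,67,89$ and the $2^x-3^y$ case of $85$ outright, and an escalation finishes $59,83,91,97$. I verified the escalations you defer: in each of these four cases the mod $5$ step pins down $y$ modulo $4$, while reduction mod $16$ (using $2^x\equiv 0$ for $x\ge 4$ and the $4$-cycle $3,9,11,1$ of $3^y$ mod $16$) demands the conflicting residue of $y$ modulo $4$, so mod $16$ suffices uniformly and mod $13$ is never actually needed; writing out those four short computations is all that separates your sketch from a complete proof. Your approach buys uniformity and mechanical verifiability, needs neither Gersonides' Theorem nor the primality of $41,89,97$ (so the composite $91$ is treated no differently), and\,---\,a genuine point in your favor\,---\,your Form C is handled explicitly, whereas the paper's assertion that an odd $N$ coprime to $6$ must be $2^m-3^n$ or $3^n-2^m$ silently omits the shape $2^a3^b-1$ with $a,b\ge 1$, a (small, easily filled) gap your check of $N+1$ closes. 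What the paper's route buys in exchange is brevity for the primes $\equiv 1\pmod{8}$, where the difference-of-squares factorization reuses machinery already established earlier in the paper.
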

\begin{proof}
Among the eleven numbers we have nine odd and two even.\\
 We focus on the odd ones first, and note that none of them are divisible by 3.
If one of them is a difference of harmonic numbers, that difference is either $2^m-3^n$ or $3^n-2^m$.\\
We show first that none of them can be written as $2^m-3^n$, where $m\ge 6$. We start with 85 and we see that if $85=2^m-3^n$, remainders modulo 8 tell us that $n$ must be odd. Then $3^n$ ends in 3 or 7, so $2^m$ ends in 8 or 2, respectively. This means that $m$ is also odd. Then we have
$(3-1)^m=3^n+85,$
or 
$3^m-m3^{m-1}+\ldots+3m-1=3^n+85,$
and this is a contradiction because $3\nmid 86$. So 85 cannot be written as $2^m-3^n$.\\
All the remaining eight odd numbers are either of the form $8k+1$ (41, 89, 97) or $8k+3$. None of them can be written as $2^m-3^n$, which has remainder modulo 8 either 7 or 5. In conclusion, none of the nine odd numbers in the statement can be written as $2^m-3^n$.\\
Now we show that none of the odd numbers can be written as $3^n-2^m$. Again we start with 85 and we see that if $85=3^n-2^m$, then $m\ge 8$ and the remainders modulo 8 are 5 on the left and 1 or 3 on the right, a contradiction. So 85 cannot be written as $3^n-2^m$.\\
We next show that none of the numbers of the form $8k+1$, i.e. 41, 89, and 97 can be written as $3^n-2^m$. Since it is clear that $m\ge 4$, we have that the remainder of $3^n$ modulo 8 is 1, so $n=2s$ is even. Since 41 and 89 have remainder 2 modulo 3, in their cases $m=2t$ is also even. Also if $97=3^n-2^m=(8+1)^s-2^m=8^s+s8^{s-1}+\ldots+8s+1-2^m$, so $s$ is even, which means that $3^n$ ends in 1, hence $2^m$ ends in 4, thus $m=2t$ is even for all three numbers. But then $3^n-2^m=(3^s-2^t)(3^s+2^t)$, so $3^s-2^t=1$ because all of 41, 89 and 97 are prime. By Theorem \ref{gersonides} we get that either $s=2$ and $t=3$ or $s=t=1$, but none of them is possible.\\
We now show that none of the remaining odd numbers, which are all of the form $8k+3$, can be written as $3^n-2^m$. Taking remainders modulo 8 we see that for all of them $n=2s+1$ has to be odd. Then $3^n$ ends in 3 or 7. It follows that $3^n-43$ ends in 4 and $3^n-67$ ends in 6 (because no power of 2 ends in 0). This means that for both 43 and 67 $m$ would be even and we would have
$$(3-1)^m=3^m-m3^{m-1}+\ldots-3m+1=3^n-l,$$
where $l$ is either 43 or 67. This cannot happen because $3\nmid 44\cdot 68$. This shows that none of 43 and 67 can be written as a difference of harmonic numbers.\\
We now show that none of 59, 83, and 91 can be written as $3^{2s+1}-2^m$. Indeed, if
\begin{equation}\label{last3}
3(8+1)^s=2^m+u,
\end{equation}
where $u\in\{59,83,91\}$, then
$$ 3\cdot 8^s+3s8^{s-1}+\ldots+3\cdot 8s+3=2^m+u,$$
so
$$ 3\cdot 8^s+3s8^{s-1}+\ldots+3\cdot 8s=2^m+v,$$
where $v\in\{56,80,88\}$, and after dividing by 8 we get
 $$ 3\cdot 8^{s-1}+3s8^{s-2}+\ldots+3s=2^{m-3}+w,$$
where $w\in\{7,10,11\}$.\\
For $w=7$ we get that $3s-7$ is even, so $s$ is odd, and hence the left hand side of (\ref{last3}) ends in 7. Then $2^m$ ends in 8 and $m$ is odd. Then $3^n=3^m-m3^{m-1}+\ldots+3m-1+59$, which is a contradiction because $3\nmid 58$.\\
For $w=10$ we get that $3s-10$ is even, so $s$ is even,  hence the left hand side of (\ref{last3}) ends in 3. Then $2^m$ ends in 0, a contradiction.\\
For $w=11$ we get that $s$ is odd, and hence the left hand side of (\ref{last3}) ends in 7. Then $2^m$ ends in 6 and $m=2t$ is even. Then $3^n=4^t+91=3^t+3^{t-1}+\ldots+3t+1+91$, which is a contradiction because $3\nmid 92$. In conclusion, none of the nine odd numbers in the statement can be written as a difference of harmonic numbers.\\
We end by showing that neither 82 nor 86 can be written as a difference of harmonic numbers. They cannot be written as a difference of even harmonic numbers because 41 and 43 are $ndh$-numbers. Then they would have to be written as a difference of two odd harmonic numbers, i.e. $3^n-1$. This is not possible because neither 83 nor 87 are powers of 3, and the proof is complete. 
\end{proof}

The smallest $ndh$-number (i.e. 41) appears in a lot of places playing many roles, like a character actor. It is Euler's largest lucky number, it is also a Newman-Shanks-Williams prime, a Sophie Germain prime,
an Eisenstein prime, a Proth prime, and (according to the theologian and musicologist Friedrich Smend) it even appears in the works of Bach (yes, the composer!). Smend claimed in \cite{sm} that J. S. Bach regularly used the natural-order alphabet (which assigns numbers to letters: A=1, I,J=9, U,V=20, Z=24, and then assigns to each word the sum of the numbers corresponding to the letters in that word). One of Smend's examples is (see \cite{t}) the {\it Canon a 4 voce} written in 1713 for his second cousin Johann Gottfried Walther, in which Smend claims that Bach used his own last name as the number of bars:
 $$\begin{array}{ccccccccc}
B&&A&&C&&H&=&14\\
2&+&1&+&3&+&8&&
\end{array}$$
and his cousin's last name as the number of sounding notes:
 $$\begin{array}{ccccccccccccccc}
W&&A&&L&&T&&H&&E&&R&=&82.\\
21&+&1&+&11&+&19&+&8&+&5&+&17&&
\end{array}$$

Smend also points out that Bach's full name is exactly half of Walther's last name:
$$\begin{array}{ccccccccccccc}
J.&&S.&&B&&A&&C&&H&=&41.\\
9&+&18&+&2&+&1&+&3&+&8&&
\end{array}$$

Smend's theory was adopted by many people who interpreted the number of bars and notes in Bach's scores according to the natural-order alphabet. Musicologist Ruth Tatlow studied the plausibility of Smend's claims in \cite{t}, challenged his conclusions, and recommended caution in using his theory. As far as our paper is concerned, the last two numbers (41 and 82) are $ndh$-numbers, while 14 can be written as a difference of harmonic numbers in the following ways: $16-2, 18-4,$ and $32-18$ (see Figure \ref{fig1} and Theorem \ref{mersennediff} ii) for the proof). As we will soon see, $41$ will play more roles in this section.\\
As a direct consequence of the definition of $ndh$-numbers we have the following:
\begin{proposition}\label{abcharmnd}
The $abc$-conjecture is true on the set of harmonic numbers joined with a finite set of $ndh$-numbers.
\end{proposition}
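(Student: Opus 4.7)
My plan is to reduce to Corollary \ref{abcharm} via a short case analysis on how many entries of a coprime triple lie in the finite set $N = \{n_1, \ldots, n_k\}$ of adjoined $ndh$-numbers, rather than in the set $H$ of harmonic numbers. Concretely, I would fix an arbitrary coprime triple $(a,b,c)$ with $a+b=c$ and $a,b,c \in H \cup N$, and count how many of $a, b, c$ belong to $N$.

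First, if all three belong to $H$, Corollary \ref{abcharm} already bounds the number of violating triples. Next, if exactly one of the three lies in $N$ and it is the sum $c$, then $c$ takes at most $k$ values, so the equation $a+b=c$ admits only finitely many positive integer solutions. If instead the single $N$-element is a summand, say $a \in N$, then $a = c-b$ with $b,c \in H$ would realize the $ndh$-number $a$ as a difference of two harmonic numbers, directly contradicting its defining property; this subcase is therefore vacuous. Finally, if two or three of $a, b, c$ lie in $N$, the finiteness of $N$ together with the linear relation $a+b=c$ pins all three down to finitely many possibilities.

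Collecting the cases shows that only finitely many coprime triples in $H \cup N$ can possibly violate the $abc$-inequality (for any given $\varepsilon>0$), which is exactly what is required. The main leverage is the vacuous subcase above: the defining property of $ndh$-numbers, combined with the coprimality hypothesis, rules out the only potentially infinite family that could escape Corollary \ref{abcharm}. There is no substantive obstacle in this argument---the work is really just to organize the case split cleanly and to verify that no coprimality issue arises when Corollary \ref{abcharm} is invoked in the first case.
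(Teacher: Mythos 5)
Your proof is correct and follows essentially the same route as the paper's (much terser) proof: the paper simply observes that any triple with two or more $ndh$-numbers is pinned down to finitely many possibilities, leaving implicit exactly the case analysis you spell out (the summand case being vacuous by the defining property of $ndh$-numbers, the sum case being finite, and the all-harmonic case being Corollary \ref{abcharm}). One small note: the vacuous subcase needs only the $ndh$ property, not the coprimality hypothesis, which plays no role there.
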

\begin{proof}
The only way to possibly get infinitely many solutions is if at most one of the numbers is an $ndh$-number.
\end{proof}

\vspace{5mm}

The following result shows  in four different ways that there are infinitely many $ndh$-numbers.
\begin{theorem}\label{infinitendh}
The following assertions hold:\\
i) $2^n41$ is an $ndh$-number for all n.\\
ii) $3^n41$ is an $ndh$-number for all n.\\
iii) If $x$ is an $ndh$-number then either $2x$ or $3x$ is an $ndh$-number.\\
iv)  Any prime number of the form $48k+41$ is an $ndh$-number. Note that by Dirichlet's Theorem \cite{a} this set is infinite because $1=(48,41)$. 
\end{theorem}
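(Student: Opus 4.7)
The plan is to prove parts (i)--(iv) by repeatedly applying the template used in Theorems~\ref{gersonides} and~\ref{ndhn100}: in a putative equality $N=2^a3^b-2^c3^d$, I compute the $2$-adic and $3$-adic valuations of $N$ to peel off common factors, then dispatch the ``primitive'' residual equation using Theorem~\ref{ndhn100}, Theorem~\ref{gersonides}, or an elementary order argument modulo $41$ and $5$. For (i), $v_3(2^n\cdot 41)=0$ forces $\min(b,d)=0$, and case analysis on the relative sizes of $a,c$ reduces every sub-case either to ``$41$ is a harmonic difference'' (contradicting Theorem~\ref{ndhn100}), to $2^{a-c}3^b=42$ (impossible since $7\mid 42$), or to the equation $3^b-1=41\cdot 2^m$. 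For this last I would use $\mathrm{ord}_{41}(3)=8$ (from $3^4\equiv-1\pmod{41}$), so $4\mid b$; combined with $\mathrm{ord}_5(3)=4$ this gives $5\mid 3^b-1$, contradicting the right-hand side. Part (ii) is the mirror image: $\min(a,c)=0$ and the critical sub-case becomes $2^a-1=41\cdot 3^m$, which fails because $\mathrm{ord}_{41}(2)=20$ forces $4\mid a$, hence $5\mid 2^a-1$.

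For (iii) I would argue the contrapositive: if both $2x$ and $3x$ are harmonic differences, so is $x$. Write $2x=H_1-H_2$; parity forces $H_1$ and $H_2$ to be both even or both odd. In the first case $x=H_1/2-H_2/2$ is already a harmonic difference, so we may assume $2x=3^{b_1}-3^{b_2}$. Symmetrically, either $3\mid H_3$ and $3\mid H_4$ in $3x=H_3-H_4$ (allowing us to divide by $3$ to obtain a representation of $x$), or $3x=2^{c_1}-2^{c_2}$. The remaining simultaneous case produces, via $3(2x)=2(3x)$, the coincidence
\[
3^{b_1+1}-3^{b_2+1}=2^{c_1+1}-2^{c_2+1}.
\]
The hard step is to show this Diophantine equation forces $x$ into a short list (I expect $\{1,4,40\}$, each already a harmonic difference: $1=2-1$, $4=8-4$, $40=48-8$), contradicting $x$ being $ndh$. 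I would attack this by combining mod-$8$ restrictions with Theorem~\ref{gersonides} applied to the auxiliary equations $3^p-2^q=c$ that emerge from rearranging the coincidence as $3^{b_1+1}-2^{c_1+1}=3^{b_2+1}-2^{c_2+1}$.

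For (iv), a prime $p\equiv 41\pmod{48}$ satisfies $p\equiv 1\pmod 8$ and $p\equiv 2\pmod 3$. Parity and coprimality with $3$ restrict $p=2^a3^b-2^c3^d$ to three cases. First, if $p=2^a3^b-1$ then $p+1\equiv 10\pmod{16}$, but the residues of harmonic numbers modulo $16$ lie in $\{0,1,2,3,4,6,8,9,11,12\}$, which excludes $10$. Second, if $p=2^a-3^d$ then $p\ge 41$ gives $a\ge 3$, so $p\equiv-3^d\in\{5,7\}\pmod 8$, contradicting $p\equiv 1\pmod 8$. Third, if $p=3^b-2^c$ then mod~$8$ and mod~$3$ together force $b=2s$ and $c=2t$, so $p=(3^s-2^t)(3^s+2^t)$; primality of $p$ gives $3^s-2^t=1$, and Theorem~\ref{gersonides} restricts $p$ to $5$ or $17$, neither of which is $\equiv 41\pmod{48}$. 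Infinitude follows from Dirichlet's theorem applied to $\gcd(48,41)=1$. The main obstacle throughout is (iii), since the other three parts all reduce cleanly by the valuation template to named equations handled by small-modulus arithmetic and Theorem~\ref{gersonides}, whereas (iii) requires a genuine Diophantine analysis of the coincidence equation displayed above.
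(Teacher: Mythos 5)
Your parts (i), (ii), and (iv) are correct, and they differ from the paper's arguments only in technique. For (i) and (ii) the paper makes the same reduction you do (to $3^k-1=41\cdot 2^m$ and $2^k-1=41\cdot 3^m$ respectively, via parity and the Fundamental Theorem of Arithmetic), but then dispatches the first by binomial expansions and last-digit arguments and the second by a simple mod~$8$ mismatch; your multiplicative-order arguments ($3^4\equiv -1$, $2^{10}\equiv -1 \pmod{41}$, forcing $4\mid$ exponent and hence divisibility by $5$) are correct and arguably cleaner, though for (ii) the paper's mod~$8$ comparison is shorter. In (iv) your three cases match the paper's; your mod~$16$ obstruction in the first case (that $p+1\equiv 10 \pmod{16}$ while harmonic numbers only realize residues $\{0,1,2,3,4,6,8,9,11,12\}$ mod $16$) is a valid replacement for the paper's argument ($s=1$ by unique factorization, then mod~$8$), and the other two cases are essentially identical to the paper's.

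The genuine gap is in (iii), and you have correctly located it yourself: the coincidence equation. Your reduction is exactly the paper's (both even $\Rightarrow$ divide by $2$; both divisible by $3$ $\Rightarrow$ divide by $3$; remaining case $2x=3^{b_1}-3^{b_2}$, $3x=2^{c_1}-2^{c_2}$), and your predicted exceptional set $\{1,4,40\}$ matches the paper's three solutions precisely. But the proposed attack --- ``mod-$8$ restrictions with Theorem~\ref{gersonides} applied to the auxiliary equations $3^p-2^q=c$'' --- cannot work. Theorem~\ref{gersonides} governs only $c=\pm 1$, whereas here $c=3^{b_2+1}-2^{c_2+1}$ ranges over infinitely many values as the smaller exponents vary, so you need a theorem about the whole two-parameter family at once: that $3^P-2^Q=3^R-2^S$ has no solutions beyond the coincidences $3-2=9-8$, $3-8=27-32$, and $16-3=256-243$. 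This is a Pillai-type result; moreover, since the equation genuinely has these solutions, a pure congruence obstruction (which proves nonexistence by a residue mismatch) cannot close the argument the way it does in (i), (ii), (iv). At exactly this point the paper invokes \cite[Theorem 4]{sc} or \cite{stt}, explicitly remarking that the proof is too long to include; Scott's proof is not elementary congruence arithmetic. Even the single fixed value $c=5$ required the elaborate order-chasing chain of Guy--Lacampagne--Selfridge reproduced in the proof of Theorem~\ref{fermatdiff}~ii), which gives a sense of how much harder the unbounded-$c$ statement is. So your outline for (iii) is a correct reduction followed by a hope, not a proof; to complete it you must cite the external Diophantine result, as the paper does.
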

\begin{proof}
i) If $n\le 1$ this follows from Theorem \ref{ndhn100}. If $n\ge 2$ and $2^n41$ is a difference of harmonic numbers then we must have $2^n41=3^k-1$ so $2^n41=(2+1)^k-1=2^k+k2^{k-1}+\ldots+2k$ and hence $k$ is even. Since neither $4\cdot 41+1$ nor $8\cdot 41+1$ are powers of $3$ it follows that $n\ge 4$. But then $k=2l$ and $2^n41=3^{2l}-1=(9-1)(9^{l-1}+9^{l-2}+\ldots+9+1)$, so we get that $l$ is even. It follows that $3^{2l}-1$ ends in $0$ so $5\mid 2^n41$, a contradiction.\\ 
ii) By Theorem \ref{ndhn100} we assume that $n\ge 2$. Since $3^n41$ is odd, if it is a difference of harmonic numbers we need to have (after possibly canceling the 3's) that $3^m41=2^k-1$ where $k\ge 3$. Now the left hand side is congruent to 1 or 3 modulo 8 while the right hand side is congruent to 7 modulo 8, a contradiction.\\
iii) If  $x$ is not divisible by $2$ or $3$ this is easy, because if both $3x$ and $2x$ are differences of harmonic numbers we have that $3x=2^m-1$ and $2x=3^n-1$. Subtracting the two equalities we get that $x$ is a difference of harmonic numbers, a contradiction.\\
The general case is hard. Let $x=2^{a-1}3^{b-1}y$, where $2\nmid y$ and $3\nmid y$ and assume that $2x=2^z3^w-2^s3^t$ and $3x=2^u3^v-2^k3^r$. Then $z, s\ge 1$ would contradict the fact that $x$ is an $ndh$-number, and if just one of them is at least 1 we get that 2 divides a power of 3. In conclusion, we get that $2x=3^w-3^t$, and by the Fundamental Theorem of Arithmetic we obtain that $t=b-1$, so $2^ay=3^c-1$, where $c=v-b+1$. Similarly we get that $3^by=2^d-1$. Then we can write $y$ in two ways:
$$\frac{3^c-1}{2^a}=\frac{2^d-1}{3^b}.$$
This means that $3^{b+c}-2^{a+d}=3^b-2^a$. By \cite[Theorem 4]{sc} or \cite{stt} (the proof is too long to include) there are only three solutions. The first one is $a=b=c=1$ and $d=2$ which gives $y=1$. The second one is $a=3$, $b=1$, and $c=d=2$ which also gives $y=1$. Finally, the third one is $a=4$, $b=1$, and $c=d=4$ which gives $y=5$. Since neither 1 nor 5 are $ndh$-numbers, the proof is complete.\\
iv) Assume that $p=48k+41$ is prime. Because $p$ is odd, if $p$ is a difference of harmonic numbers we are in one of the following three cases.\\
{\it Case 1.} $p=48k+41=3^n-2^m$ with $m\ge 1$. Taking remainders modulo 8 on both sides we see that $m\ge 3$ and $n=2t$ is even. If $m$ is odd then $48k+41=3^n-3^m+m3^{m-1}-\ldots-3m+1$ so $3\mid 40$, a contradiction. Hence $m=2s$ is also even. Now $p=48k+41=(3^t-2^s)(3^t+2^s)$ and since $p$ is prime we get $3^t-2^s=1$ so by Theorem \ref{gersonides} we get $t=s=1$ or $t=2$ and $s=3$. This means $n=m=2$ or $n=4$ and $m=6$ none of which are possible.\\
{\it Case 2.} $p=48k+41=2^m-3^n$ so $m\ge 6$. The remainders modulo 8 are 1 on the left and 7 or 5 on the right, a contradiction.\\
{\it Case 3.} $p=48k+41=2^s3^t-1$. Then $48k+42=6(8k+7)=2^s3^t$. By the Fundamental Theorem of Arithmetic we get $s=1$ and $8k+7=3^{t-1}$. The remainders modulo 8 are 7 on the left and 1 or 3 on the right, a contradiction.
\end{proof}

We remark that it is not true that if $x$ is an $ndh$-number then $2x$ is an $ndh$-number. Since $91$ is an $ndh$-number by Theorem \ref{ndhn100}, if this would be true then $2^391$ would be an $ndh$-number. However $2^391=728=3^6-1$. The implication $x$ is an $ndh$-number implies $3x$ is an $ndh$-number fails as well. We have that $85$ is an $ndh$-number by Theorem \ref{ndhn100}, but $3\cdot 85=255=2^8-1$.\\[1mm]

We can now add infinitely many numbers to the set on which the $abc$-conjecture holds:
\begin{corollary}\label{abcharmprimes}
The $abc$-conjecture is true on the set of harmonic numbers joined with the infinite set of primes of the form $48k+41$.
\end{corollary}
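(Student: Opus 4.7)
The plan is to show that for any $\varepsilon > 0$ only finitely many coprime triples $(a,b,c)$ with $a+b = c$ and $a,b,c \in S := H \cup P$ can satisfy $c > rad(abc)^{1+\varepsilon}$, where $H$ is the set of harmonic numbers and $P$ the set of primes of the form $48k+41$. I would split the analysis according to how many of $a,b,c$ lie in $P$. If none do, then all three are harmonic and Corollary \ref{abcharm} disposes of the case.

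The crucial case is when exactly one of $a,b,c$ lies in $P$. If a summand equals $p\in P$, say $a=p$, then $b,c$ are harmonic and the equation $p = c-b$ writes $p$ as a difference of two harmonic numbers, which is forbidden by Theorem \ref{infinitendh}(iv). If instead $c = p$, then $p \mid rad(abc)$, so $rad(abc) \ge p = c$, which forces $c \le rad(abc)^{1+\varepsilon}$ automatically. Thus the ``summand'' configuration is ruled out structurally, and the ``sum'' configuration is defeated by the radical always being at least $c$.

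The remaining cases are easy. All three in $P$ is impossible by parity, since a sum of two odd primes is even while $c$ would be odd. If both summands lie in $P$ and $c\in H$, then $c \equiv 82 \equiv 34 \pmod{48}$, and a short residue check modulo $3$ and modulo $16$ shows that no harmonic number $2^a 3^b$ achieves this residue; this is essentially the only place any genuine computation is needed. If instead one summand and the sum lie in $P$ (say $a=p_1$, $c=p_2$, $b\in H$), then $rad(abc) \ge p_1 p_2 \ge c$ again forces the inequality to fail. The main obstacle is therefore minimal: Theorem \ref{infinitendh}(iv) does the real work in the only delicate case, and everything else reduces either to parity, to the short mod-$48$ check, or to the trivial bound $rad(abc) \ge c$ whenever a prime of $P$ equals $c$.
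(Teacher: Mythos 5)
Your proposal is correct and follows essentially the same strategy as the paper: a case split on how many of $a,b,c$ are primes of the form $48k+41$, invoking Theorem \ref{infinitendh} iv) when a lone prime is a summand, the bound $rad(abc)\ge c$ when the prime is $c$, and a congruence argument (your mod $3$ and mod $16$ check is equivalent to the paper's observation that $48K+82=2^s3^t$ forces $t=0$ and then $24K+41=2^{s-1}$ with odd left-hand side) when both summands are prime.
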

\begin{proof}
Clearly $a$, $b$, and $c$ cannot be all prime. If two of them are prime and one of them is $c$ then $rad(abc)>c$. We show now that we can't have that $a$ and $b$ are prime and $c$ is harmonic. Indeed, if this is the case we get $48K+82=2^s3^t$ and $t=0$ because $3\nmid 82$. Then we get $24k+41=2^{s-1}$ which is a contradiction because the left hand side is odd.\\
Finally, the case when two of the numbers are harmonic: if the prime is $c$ then the radical is big. If the prime is $a$ or $b$ there are no solutions by Theorem \ref{infinitendh} iv).
\end{proof}

We end this section by investigating in how many ways the Fermat primes can be written as a difference of harmonic numbers. Recall that a Fermat prime is a prime number of the form $F_k=2^{2^k}+1$. So far only five Fermat primes are known: $F_0=3, F_1=5, F_2=17, F_3=257,$ and $F_4=65537$. In the next result we investigate how a Fermat prime can be written as a difference of harmonic numbers.
\begin{theorem}\label{fermatdiff} The following assertions hold:\\
i) The only ways to write $3$ as a difference of harmonic numbers are:\\ $4-1, 6-3, 9-6, 12-9,$ and $27-24$.\\
ii) The only ways to write $5$ as a difference of harmonic numbers are:\\ $6-1$, $9-4$, $8-3$, and $32-27$.\\
iii) The only ways to write $17$ as a difference of harmonic numbers are:\\ $18-1$ and $81-64$.\\
iv) Any Fermat prime $F_k=2^{2^k}+1$ with $k\ge 3$ is an $ndh$-number.
\end{theorem}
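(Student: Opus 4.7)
The plan is to follow the three-case template used in the proof of Theorem \ref{infinitendh} iv) for primes of the form $48k+41$. Since $F_k = 2^{2^k}+1$ is an odd prime coprime to $3$ (because $2^{2^k} \equiv 1 \pmod 3$ for $k \geq 1$, so $F_k \equiv 2 \pmod 3$), writing $F_k = A - B$ with $A,B$ harmonic forces exactly one of $A,B$ to be odd and forbids a common factor of $3$ on the two sides. Hence one of three scenarios must hold: $F_k = 3^n - 2^m$, $F_k = 2^m - 3^n$, or $F_k = 2^s 3^t - 1$.

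For the case $F_k = 2^m - 3^n$, I would rearrange to $3^n + 1 = 2^m - 2^{2^k}$. After ruling out $n=0$ by size considerations, I must have $m > 2^k$, whence the right-hand side has $2$-adic valuation exactly $2^k$. A short mod-$8$ computation shows that $v_2(3^n+1) \leq 2$ for every $n \geq 1$, so $2^k \leq 2$, contradicting $k \geq 3$.

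For the case $F_k = 2^s 3^t - 1$, adding $1$ yields $2^s 3^t = 2^{2^k}+2 = 2\bigl(2^{2^k-1}+1\bigr)$. The parenthesized factor is odd, so $s = 1$ and $2^{2^k-1}+1 = 3^t$, exhibiting $2^{2^k-1}$ and $3^t$ as consecutive harmonic numbers. Since $k \geq 3$ forces $2^{2^k-1} \geq 128 > 4$, Gersonides' Theorem \ref{gersonides} applies and forces $2^{2^k-1} = 8$, i.e.\ $k = 2$, again a contradiction.

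The main obstacle is the case $F_k = 3^n - 2^m$. I would first reduce modulo $3$ (using $F_k \equiv -1 \pmod 3$) to conclude $m$ is even, and then modulo $8$ (using $F_k \equiv 1 \pmod 8$ for $k \geq 2$, after dismissing the lone small case $m=2$ via the impossibility of $3^n \equiv 5 \pmod 8$) to conclude $n$ is even as well. Writing $n = 2t$ and $m = 2s$, the factorization
\[
F_k = (3^t - 2^s)(3^t + 2^s)
\]
together with the primality of $F_k$ forces $3^t - 2^s = 1$. Since $F_k \geq 257$ for $k \geq 3$ makes both $3^t$ and $2^s$ exceed $4$, Gersonides' Theorem yields $(t,s)=(2,3)$ and hence $F_k = 9+8 = 17 = F_2$, contradicting $k \geq 3$. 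The key technical point is simply ensuring that the sporadic small solutions allowed by Gersonides (the pairs $(1,2)$, $(2,3)$, $(3,4)$, $(8,9)$) are all eliminated by the size lower bound $F_k \geq 257$.
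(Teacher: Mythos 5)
Your case analysis correctly proves assertion iv), and for that part it is essentially the paper's own argument: the same three-way split into $F_k=3^n-2^m$, $F_k=2^m-3^n$, and $F_k=2^s3^t-1$, the same use of the Fundamental Theorem of Arithmetic to force $s=1$ in the last case, the same factorization $(3^t-2^s)(3^t+2^s)$ combined with primality of $F_k$, and Gersonides' Theorem \ref{gersonides} to finish. Your only real departure is in the case $F_k=2^m-3^n$, where you count $2$-adic valuations; the paper dispatches this case in one line by comparing residues modulo $8$ (the left side is $\equiv 1$, the right side $\equiv 5$ or $7$). Both work.

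The gap is that the theorem has four assertions and you prove only the last one. Part iii) is the mildest omission: the paper proves iii) and iv) simultaneously by running exactly your case analysis for every $k\ge 2$ and recording, rather than contradicting, the solutions that survive at $k=2$ (the case $2^s3^t-1$ yields $17=18-1$, and the case $3^n-2^m$ yields $17=81-64$); you could repair this by rerunning your three cases at $k=2$ with that bookkeeping. Part i) needs its own short argument (split according to whether both harmonic numbers are divisible by $3$, and reduce to Gersonides). But part ii) is a genuinely missing idea: $5=2^m-3^n$ has the two solutions $8-3$ and $32-27$, so no congruence or factorization argument of the kind you rely on can close it --- $5\equiv 5\pmod 8$ is perfectly compatible with $2^m-3^n$ for $m\ge 3$ and $n$ odd, and since the exponents need not be even there is no difference-of-squares trick available. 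The paper must instead use the Guy--Lacampagne--Selfridge argument: starting from the known solution $5=2^5-3^3$, write $2^5(2^a-1)=3^3(3^b-1)$ with $a=m-5$, $b=n-3$, and chain divisibility and multiplicative-order computations modulo $27$, $81$, $32$, $41$, $11$, $7$, and $271$ until the conditions $9\mid a$, $27\nmid a$, and $135\mid a$ collide. This is the hardest step of the entire theorem --- the paper explicitly flags it as ``quite hard'' --- and your proposal does not engage with it at all.
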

\begin{proof}
i) Let $3=h-k$, where $h,k$ are harmonic numbers. If none of $h$ and $k$ are divisible by 3 then, since one of them is odd and the other one is even it follows that $k=1$ and $h$ is a power of two, so $h=4$ and we obtain  the first difference. If both $h$ and $k$ are divisible by 3, then $h=3h_1$ and $k=3k_1$, where $h_1$ and $k_1$ are consecutive harmonic numbers and so by Theorem \ref{gersonides} we obtain the last four differences.\\
ii) The first two cases are really easy:  $5=2^s3^t-1$ gives us the first difference: $5=6-1$. The second case $5=3^n-2^m$ is Problem 9 in Section XVI of \cite{gp2} and is also very easy: assume that $5=3^n-2^m$ and note that $m\ge 2$. On the other hand $m$ cannot be $\ge 3$ because the remainders modulo 8 on the two sides would not match (5 on the left and 1 or 3 on the right) so $m=n=2$ and this gives us the second difference in the statement: $5=9-4$. As Lenstra says, sometimes all the difficulty hides in the last case: we have to solve $5=2^m-3^n$. This is a lot tougher than it looks. For the sake of completeness we will give the ingenious proof of Guy, Lacampagne, and Selfridge from \cite{gls}, as presented in \cite{st}. We will denote by $U_n$ the group of units of $\mathbb{Z}_n$. Buckle up, here we go: we first find the last two differences by inspection and show there are no other solutions. We write $5=2^m-3^n=2^5-3^3$. Then $2^5(2^a-1)=3^3(3^b-1)$ where $a=m-5$ and $b=n-3$. We assume that $a\ge 1$ and $b\ge 1$ and look for a contradiction.  Now $27=3^3\mid 2^a-1$ but $81\nmid 2^a-1$ so $9\mid a$ (because $18=ord(2)_{U_{27}}$) but $27\nmid a$ (because $54=ord(2)_{U_{81}}$). Now $2^5=32\mid 3^b-1$, so $8=ord(3)_{U_{32}}\mid b$. Then, using the factorization tables in \cite{blstw} we find our friend 41 playing a role here as well: $41\mid 3^8-1=41\cdot 160$, so $41\mid 3^b-1$ hence $41\mid 2^a-1$ and therefore $20=ord(2)_{U_{41}}\mid a$. Now $11\mid 2^{20}-1=11\cdot 95325$ so $11\mid 2^a-1$. Hence $11\mid 3^b-1$ so $5=ord(3)_{U_{11}}\mid b$. Since $7=2^3-1\mid 2^a-1$ we obtain that $7\mid 3^b-1$ so $6=ord(3)_{U_7}\mid b$. It follows that $5\cdot 6=30\mid b$ and since $271\mid 3^{30}-1=271\cdot 759745874888$ so $271\mid 2^a-1$ and $27\cdot 5=135=ord(2)_{U_{271}}\mid a$, a contradiction.\\
We will prove iii) and iv) together. Let $k\ge 2$ and try to write $F_k=2^{2^k}+1$ as a difference of harmonic numbers. We have the following possibilities:\\
{\it Case 1.} $2^{2^k}+1=2^s3^t-1$. Then $2(2^{2^k-1}+1)=2^s3^t$, so by the Fundamental Theorem of Arithmetic $s=1$ and $2^{2^k-1}+1=3^t$. By Theorem \ref{gersonides} we get that $k=t=2$ (recall that $k\ge 2$). In conclusion we get the first difference in iii): $17=18-1$.\\
{\it Case 2.} $2^{2^k}+1=3^n-2^m$. It is easy to see that $m\notin\{0,1,2\}$, so after taking remainders modulo 8 on both sides we see that $n=2r$ is even. Now if $m$ is odd we get $2^{2^k}+1=3^n-(3-1)^m=3^n-3^m+m3^{m-1}-\ldots-3m+1$, so $3\mid 2^{2^k}$, a contradiction. Therefore $m=2t$ is also even. Now $2^{2^k}+1=(3^r-2^t)(3^r+2^t)$ so $3^r-2^t=1$ and by Theorem \ref{gersonides} $r=t=1$ or $r=2$ and $t=3$. The first option is not possible, so we are left with $n=4$ and $m=6$ which gives us the second difference in iii): $17=81-64$.\\
{\it Case 3.} $2^{2^k}+1=2^m-3^n$. Since $m\ge 3$ this cannot happen because reminders modulo 8 on the two sides do not match (1 on the left and 7 or 5 on the right).\\
This concludes the proof of the theorem because in all cases with solutions we ended up with $k=2$.
\end{proof}

We now add all Fermat primes to the set of harmonic numbers and we prove that the $abc$-conjecture still holds on this new expanded set.

\begin{corollary}
The $abc$-conjecture is true on the set of harmonic numbers joined with the set of Fermat primes.
\end{corollary}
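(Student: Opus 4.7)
The plan is to enumerate the coprime triples $(a,b,c)$ with $a+b=c$ drawn from $\mathcal{H}\cup\mathcal{F}$, where $\mathcal{H}$ denotes harmonic numbers and $\mathcal{F}$ denotes Fermat primes, and to verify that only finitely many satisfy $c>rad(abc)^{1+\varepsilon}$ for any fixed $\varepsilon>0$. Since $3=F_0$ is the unique element of $\mathcal{H}\cap\mathcal{F}$, I will count it as harmonic and classify triples by the number $N\in\{0,1,2,3\}$ of entries that are Fermat primes $F_k$ with $k\geq 1$. Each such $F_k$ is odd and satisfies $F_k\equiv 2\pmod 3$. The case $N=0$ is Corollary \ref{abcharm}, and $N=3$ is impossible because the sum of two odd numbers is even, yet every Fermat prime is odd.

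For $N=2$, both Fermat primes are odd, so $c$ must be the harmonic entry $c=2^s3^t$ with $s\geq 1$. Reducing modulo $3$ gives $c\equiv 2+2\equiv 1\pmod 3$, forcing $t=0$ and $s$ even. But for $1\leq i<j$ one has $F_i+F_j=2(2^{2^i-1}+2^{2^j-1}+1)$, and the parenthesized quantity is odd (the sum of two even numbers and an odd), so the $2$-adic valuation of $c$ is exactly $1$, contradicting $s$ even. Hence $N=2$ contributes no triples.

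For $N=1$, let $F_k$ with $k\geq 1$ denote the unique Fermat prime entry; the other two entries are coprime harmonic and so, up to swap, have the form $3^\beta$ and $2^\gamma$ with $\beta,\gamma\geq 0$. If $c=F_k$, then $a+b=F_k$ forces at least one of $a,b$ to be at least $2$, so $rad(abc)\geq 2F_k>c$ and the abc inequality is satisfied trivially. Otherwise $F_k=a$ (or $F_k=b$ by symmetry) and $b,c$ are coprime harmonic with $c=F_k+b$; the possibilities are $b=1$ (so $c=F_k+1$ must be harmonic, which by Case 1 of the proof of Theorem \ref{fermatdiff} restricts $k$ to a finite set), or $\{b,c\}=\{2^\mu,3^\nu\}$ with $\mu,\nu\geq 1$, yielding $F_k=3^\nu-2^\mu$ or $F_k=2^\mu-3^\nu$, each of which has only finitely many solutions across all Fermat primes by Theorem \ref{fermatdiff}. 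The main obstacle is keeping the $N=1$ case analysis airtight, especially confirming that the infinite family arising from $c=F_k$ never violates the abc inequality; once that is verified, all remaining triples form a finite set, and the conclusion follows for every $\varepsilon>0$.
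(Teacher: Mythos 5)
Your stratification by the number $N$ of Fermat-prime entries is essentially the paper's own case analysis, and your $N=3$, $N=1$, and both-summands-prime computations are sound; but the $N=2$ case contains a genuine error. From the oddness of the two Fermat primes you conclude that ``$c$ must be the harmonic entry.'' That inference is invalid: oddness only rules out the configuration in which all three entries are Fermat primes (your $N=3$ case). Under $N=2$ the two Fermat primes may perfectly well be a summand and the sum, with an even harmonic number as the other summand, and this subcase is not vacuous: $5+12=17$, i.e.\ $F_1+2^2\cdot 3=F_2$, is a coprime triple drawn from $\mathcal{H}\cup\mathcal{F}$. So your conclusion that ``$N=2$ contributes no triples'' is false. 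The triple poses no danger to the conjecture, since $c=17$ is prime and hence $rad(abc)=rad(ab)\cdot c\geq 2c>c$, but your case analysis as written never examines it. The clean repair is the one the paper makes at the very start: dispose of \emph{all} triples in which $c$ is prime (they satisfy $rad(abc)>c$, hence never violate the inequality for any $\varepsilon>0$), and only then stratify the remaining triples, in which $c$ is harmonic, by how many of the summands $a,b$ are Fermat primes. This is exactly the observation you already use inside your $N=1$ case; it simply needs to be applied globally rather than only there.

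Two smaller remarks. First, your claim that two coprime harmonic numbers have, up to swap, the form $3^\beta$ and $2^\gamma$ is not quite right: the pair $\{1,\,2^s3^t\}$ with $s,t\geq 1$ is also coprime and actually occurs (e.g.\ in the triple $1+5=6$ and in $1+17=18$). Your subsequent enumeration happens to re-admit this case through the branch $b=1$ with $c=F_k+1$ an arbitrary harmonic number, so no damage results, but the statement contradicts your own case list and should be corrected. Second, in the both-summands-prime case the paper extracts $s=1$ from the Fundamental Theorem of Arithmetic and then reduces $2^{2^k-1}+2^{2^l-1}+1=3^t$ modulo $3$, whereas you reduce modulo $3$ first and then use the $2$-adic valuation; these are equivalent in substance, so that portion of your argument is fine.
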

\begin{proof}
A solution cannot have all three primes because they are all odd. If $c$ is a prime, then $rad(abc)>c$. So we have to look at the case when one or both of $a$ and $b$ are primes. The case when only one of them is prime is covered by Theorem \ref{fermatdiff}. Now if $2^{2^k}+1+2^{2^l}+1=2^s3^t$ then $2(2^{2^k-1}+2^{2^l-1}+1)=2^s3^t$. By the Fundamental Theorem of Arithmetic we get that $s=1$ and $2^{2^k-1}+2^{2^l-1}+1=3^t$. Since both exponents on the left are odd it follows that the remainder modulo 3 on the left is $2+2+1\equiv 2$ (mod $3$), a contradiction.
\end{proof}

\section{Numbers that can be written as differences of harmonic numbers}
A Mersenne prime is a prime number of the form $2^p-1$ (it is easy to see that if $2^p-1$ is prime, then $p$ is also prime). The first three Mersenne primes are 3, 7, and 31, corresponding to values of $p$ 2, 3, and 5.  There are currently less than 50 known Mersenne primes. In this section we investigate how a Mersenne prime can be written as a difference of harmonic numbers.
\begin{theorem}\label{mersennediff} The following assertions hold:\\
i) The only ways to write $3$ as a difference of harmonic numbers are:\\ $4-1, 6-3, 9-6, 12-9,$ and $27-24$.\\
ii) The only ways to write $7$ as a difference of harmonic numbers are:\\ $8-1, 9-2,$ and $16-9$.\\
iii) For any Mersenne prime $2^p-1$ with $p\ge 5$ there is no other way to write it as a difference of harmonic numbers.
\end{theorem}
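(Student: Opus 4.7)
The plan is as follows. Part (i) repeats Theorem \ref{fermatdiff} i) verbatim, so I would simply cite it. For parts (ii) and (iii) the starting observation is the same: if $N$ equals $7$ or equals a Mersenne prime $2^p-1$ with $p \ge 2$, then $N$ is coprime to $6$. Consequently, for any representation $N = h - k$ with harmonic $h, k$, the quantity $\gcd(h, k)$ is a harmonic divisor of $N$ and hence equal to $1$. Writing $h = 2^a 3^b$, $k = 2^c 3^d$, this forces $\min(a, c) = \min(b, d) = 0$, and a short subcase analysis shows that the surviving possibilities are: (A) $k = 1$, so $N = 2^s 3^t - 1$; (B) $h = 3^n$, $k = 2^m$, so $N = 3^n - 2^m$ with $m, n \ge 1$; (C) $h = 2^m$, $k = 3^n$, so $N = 2^m - 3^n$ with $m, n \ge 1$.

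For part (ii) ($N = 7$), Case (A) gives $2^s 3^t = 8$, hence $8 - 1$. In Case (B), reducing modulo $8$ for $m \ge 3$ forces the impossible $3^n \equiv 7 \pmod 8$, so $m \in \{1, 2\}$, and direct inspection yields $9 - 2$. In Case (C), $n \ge 1$ forces $m \ge 4$; then $2^m \equiv 1 \pmod 3$ and $3^n \equiv 1 \pmod 8$ make $m = 2t$ and $n = 2r$ both even, and $(2^t - 3^r)(2^t + 3^r) = 7$ together with the primality of $7$ pins down $2^t - 3^r = 1$, $2^t + 3^r = 7$, yielding $16 - 9$.

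For part (iii) ($N = 2^p - 1$, $p \ge 5$), Case (A) forces $2^s 3^t = 2^p$, i.e. $t = 0$, $s = p$, giving the trivial expression $2^p - 1$. In Case (B), the mod-$8$ congruence $3^n - 2^m \equiv 7 \pmod 8$ kills $m \ge 3$; the subcase $m = 2$ leads to $3 \mid 2^p$; the subcase $m = 1$ reduces to $3^n = 2^p + 1$, where setting $n = 2r$ and exploiting that $\gcd(3^r - 1, 3^r + 1) = 2$ in $(3^r - 1)(3^r + 1) = 2^p$ forces $p = 3$, contradicting $p \ge 5$. In Case (C), the analogous mod-$3$ and mod-$8$ arguments force $m = 2t$ and $n = 2r$; the primality of $N$ in $(2^t - 3^r)(2^t + 3^r) = 2^p - 1$ yields $2^t - 3^r = 1$ and $2^t + 3^r = 2^p - 1$, hence $2^{t+1} = 2^p$ and so $3^r = 2^{p-1} - 1$. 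Applying Theorem \ref{gersonides} to the consecutive harmonic numbers $3^r$ and $2^{p-1}$ forces $p - 1 \le 2$, again contradicting $p \ge 5$.

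The main obstacle is Case (C) of part (iii): the equation $2^m = 2^p - 1 + 3^n$ is not obviously tractable, and the two parity conclusions ($m$ even from mod $3$, $n$ even from mod $8$) are what convert it into a difference of squares so that the primality of the Mersenne prime can collapse the factorization and invoke Gersonides' theorem. The parity of $n$ is the less apparent ingredient and is the key insight that makes the whole approach succeed.
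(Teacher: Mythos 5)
Your proposal is correct and follows essentially the same route as the paper's proof: reduce by coprimality to the three shapes $2^s3^t-1$, $3^n-2^m$, and $2^m-3^n$, use congruences modulo $8$ and modulo $3$ to force both exponents even, factor as a difference of squares, and let primality together with Theorem \ref{gersonides} collapse the factorization. One point in your favor: in Case (B) of part (iii) you explicitly dispose of the subcase $m=1$ (i.e.\ $3^n=2^p+1$) via the factorization $(3^r-1)(3^r+1)=2^p$, whereas the paper's proof asserts that $m=2$ with $3^n\equiv 3 \pmod 8$ is the only possibility surviving the mod-$8$ bound and thereby overlooks $m=1$ with $n$ even, which is also consistent modulo $8$ --- so your treatment is the more complete one at that step.
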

\begin{proof}
i) This was proved in Theorem \ref{fermatdiff} i) (3 is also a Fermat prime).\\
ii) This statement is actually the union of Problems 1 and 10 in Section XVI of \cite{gp2}, but again we will give a proof for the sake of completeness (our proof is essentially the same as the one given in \cite{gp2}).\\
If $3^n=2^m+7$, then $n\ge 2$. We cannot have $m\ge 3$ because the remainder of $3^n$ modulo 8 cannot be 7 (it is either 1 or 3). Therefore $m\in\{0,1,2\}$ and the only solution is $m=1$ and $n=2$, which gives the difference $9-2$.\\
If $2^m-3^n=7$, then $m\ge 3$. If $n=0$, then $m=3$ and this gives us the difference $8-1$. If $n\ne 0$, then the remainder of $2^m=3^n+7=3^n+6+1$ modulo 3 is 1, so $m=2l$ is even and $l\ge 2$, because $m\ge 3$. On the other hand, $3^n=2^m-7=2^m-8+1$ has remainder 1 modulo 8, so $n=2k$ is also even. Then $7=2^m-3^n=2^{2l}-3^{2k}=(2^l-3^k)(2^l+3^k)$. Therefore $2^l-3^k=1$, and so $l=2$ and $k=1$ by Theorem \ref{gersonides}. This gives us the last difference, $16-9$.\\
iii) Let $p\ge 5$ be a prime, and assume that $2^p-1=2^m-3^n$. It follows that $m\ge 5$. If $n=0$, then $m=p$. We assume that $n\ne 0$ and look for a contradiction. We have that $3^n=2^m-2^p+1\equiv 1$ (mod 8), so $n=2k$ is even. If $m$ is odd, we have $2^p-1=(3-1)^m-3^n=3^m-m3^{m-1}+\ldots+3m-1-3^n$, and so $3\mid 2^p$, which is not possible. Thus $m=2l$ is also even. Now $2^p-1=2^{2l}-3^{2k}=(2^l-3^k)(2^l+3^k)$. Since $2^p-1$ is prime, we get that $2^l-3^k=1$, so by Theorem \ref{gersonides} $l=2$, therefore $m=4$, which contradicts $m\ge 5$.\\
We assume now that $2^p-1=3^n-2^m$, so $n\ge 4$. Since $2^p-1\equiv 7$ (mod 8) and $3^n\equiv$ 1 or 3 (mod 8), it follows that $m\le 2$. The only possibility is $3^n\equiv$ 3 (mod 8) and $m=2$. Then $2^p-1=3^n-4$, from which we get again that $3\mid 2^p$, which is a contradiction and the proof is complete.
\end{proof}
Theorem \ref{mersennediff} allows us to obtain our last expansion of the set on which the $abc$-conjecture holds by adding the Mersenne primes to the set of harmonic numbers. 
\begin{corollary}
The $abc$-conjecture is true on the set of harmonic numbers joined with the set of Mersenne primes.
\end{corollary}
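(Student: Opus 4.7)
The plan is to follow the same case-analysis strategy used in the preceding corollaries. Let $H$ denote the set of harmonic numbers and $M$ the set of Mersenne primes, and consider a coprime triple $(a,b,c)$ of positive integers in $H\cup M$ with $a+b=c$. I would split into cases according to how many of $a,b,c$ lie in $M$. The all-harmonic case is Corollary \ref{abcharm}, and the all-Mersenne case is vacuous since odd $+$ odd $\ne$ odd.

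Next, whenever $c\in M$, the prime $c$ divides $rad(abc)$, so $rad(abc)\ge c$ and therefore $c\le rad(abc)^{1+\varepsilon}$ for every $\varepsilon>0$; no exception occurs. This observation disposes of every case in which the largest element is a Mersenne prime, and leaves only triples where $c$ is harmonic and at least one of $a,b$ is a Mersenne prime.

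If exactly one of $a,b$ is a Mersenne prime, we are counting representations of that Mersenne prime as a difference of two harmonic numbers, which is governed by Theorem \ref{mersennediff}. That theorem produces only finitely many triples from $p\in\{2,3\}$, and for every Mersenne prime $2^p-1$ with $p\ge 5$ the only representation is the trivial $(2^p-1)+1=2^p$. On this (possibly infinite) family $rad(abc)=2(2^p-1)>2^p=c$, so again no exception arises.

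The main obstacle is the remaining subcase: $a=2^p-1$ and $b=2^q-1$ are both Mersenne primes with $p<q$ (coprimality of $a,b$ forces $p\ne q$) and $c=2^p+2^q-2$ is harmonic. Factoring, $c=2(2^{p-1}+2^{q-1}-1)$; since $p\ge 2$ the factor in parentheses is odd, so if $c=2^s3^t$ then $s=1$ and $2^{p-1}+2^{q-1}-1=3^t$ with $t\ge 1$. I would finish by reducing modulo $3$: if $p$ and $q$ are both odd, $2^{p-1}+2^{q-1}\equiv 1+1\equiv 2\pmod 3$; if $p=2$ (so $q$ is odd), $2+2^{q-1}\equiv -1+1\equiv 0\pmod 3$. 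Neither residue equals $1$, contradicting $3^t+1\equiv 1\pmod 3$, so this subcase is empty. The only real work in the proof is this short modular obstruction; the rest is routine bookkeeping against the results already proved.
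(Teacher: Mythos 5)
Your proof is correct, and it follows the paper's overall decomposition (count how many of $a,b,c$ are Mersenne primes; kill the all-Mersenne case by parity, the ``$c$ prime'' case by the radical, and the ``exactly one of $a,b$ prime'' case by Theorem \ref{mersennediff}, noting the low quality of the infinite family $(2^p-1)+1=2^p$). Where you genuinely depart from the paper is the hardest subcase, two Mersenne primes summing to a harmonic number. The paper reduces $2^{p-1}+2^{q-1}-1=3^s$ modulo $8$ to force one exponent into $\{2,3\}$, then invokes Gersonides' Theorem \ref{gersonides} when $p=2$ (which is how it discovers the exceptional, non-coprime sum $3+3=6$) and a separate contradiction when $p=3$. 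You instead use coprimality up front to force $p\ne q$, and then exploit the fact that Mersenne exponents are prime, hence equal to $2$ or odd, so that $2^{p-1}+2^{q-1}\bmod 3$ takes only the values $2$ (both odd) or $0$ ($p=2$), never the required value $1\equiv 3^t+1\pmod 3$. This single mod $3$ computation replaces the paper's mod $8$ reduction plus its appeal to Gersonides, and it dissolves the $3+3=6$ exception before it can arise; the trade-off is that your argument leans on primality of the exponents in an essential way, while the paper's mod $8$ technique is the same tool it uses throughout Sections 2 and 3 and needs primality only at the very end (to discard $q=4$). Both are complete proofs; yours is the more economical for this particular corollary.
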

\begin{proof}
Let's see how many of the three numbers in the statement of the conjecture can be Mersenne primes. It is clear that not all of them can be Mersenne primes, because the sum of two Mersenne primes is even and thus can't be a Mersenne prime.\\
Let's see if two of them can be Mersenne primes (and so the third one must be harmonic).We start by showing that the sum of two Mersenne primes cannot be harmonic, with the exception of $3+3=6$. Indeed, if $2^p-1+2^q-1=2^r3^s$ we have that $$2(2^{p-1}+2^{q-1}-1)=2^r3^s,$$ so $r=1$ by the Fundamental Theorem of Arithmetic, and 
\begin{equation}\label{eq1}
2^{p-1}+2^{q-1}-1=3^s.
\end{equation}
If both $p$ and $q$ are greater than 4, then the left hand side of (\ref{eq1}) is congruent to 7 modulo 8, while the right hand side is congruent to 1 or 3 modulo 8. Therefore one of them, say $p$, has to be 2 or 3. If $p=2$, it follows that $2^{q-1}+1=3^s$, so by Theorem \ref{gersonides} we get $q-1=1$ or $q-1=3$. The first case gives the solution $3+3=6$, while the second one is not acceptable because $4$ is not prime. If $p=3$ we get that $2^{q-1}+3=3^s$ which is another contradiction.\\
Now, if $c$ is a Mersenne prime, then it is smaller than $rad(abc)$. Finally, the case when one of $a$ or $b$ is the only Mersenne prime in the triple is covered by Theorem \ref{mersennediff} (we note that the only solution in this latter case, $2^p-1+1=2^p$ is also a low quality solution, because $2^p<rad(2^p(2^p-1))=2(2^p-1)$).
\end{proof}

\section*{Acknowledgments} 
 
We thank Wai Yan Pong for useful conversations and Crosby Lanham for helping with Java. We also thank Alexandru Gica, Constantin Manoil, Frank Miles, and Rob Niemeyer, who read the manuscript, made valuable suggestions, and corrected errors, and Paltin Ionescu for his uncanny ability (and speed) to find typos.
 
{\footnotesize
}
 
{\footnotesize  
\medskip
\medskip
\vspace*{1mm} 
 
\noindent {\it Natalia da Silva}\\  
California State University, Dominguez Hills \\
1000 E Victoria St \\
Carson, CA 90747\\
E-mail: {\tt ndasilva1@toromail.csudh.edu}\\ \\  

\noindent {\it Serban Raianu}\\  
California State University, Dominguez Hills \\
Mathematics Department\\
1000 E Victoria St \\
Carson, CA 90747\\
E-mail: {\tt sraianu@csudh.edu}\\ \\

\noindent {\it Hector Salgado}\\  
California State University, Dominguez Hills \\
1000 E Victoria St \\
Carson, CA 90747\\
E-mail: {\tt hsalgado1@toromail.csudh.edu} }\\ \\   

\vspace*{1mm}\noindent\footnotesize{\date{ {\bf Received}: January 1, 2017\;\;\;{\bf Accepted}: January 1, 2017}}\\
\vspace*{1mm}\noindent\footnotesize{\date{  {\bf Communicated by Some Editor}}}

\end{document}